\newtheorem{theorem}{Theorem}[section]
\newtheorem{proposition}[theorem]{Proposition}
\newtheorem{lemma}[theorem]{Lemma}
\newtheorem{corollary}[theorem]{Corollary}
\newtheorem{definition}[theorem]{Definition}
\newcommand{\sxn}{(x_n)}
\newcommand{\sfxn}{(f(x_n))}
\newcommand{\N}{\mathbb{N}}
\newcommand{\R}{\mathbb{R}}
\newcommand{\resgf}{J_f^\gamma}
\newcommand{\resuf}{J_f^1}
\newcommand{\beq}{\begin{equation}} \newcommand{\eeq}{\end{equation}}
\newcommand{\be}{\begin{enumerate}} \newcommand{\ee}{\end{enumerate}}
\newcommand {\bua} {\begin{eqnarray*}}
\newcommand {\eua} {\end {eqnarray*}}
\newcommand {\tilchi}{\tilde{\chi}}
\title{An application of proof mining to the proximal point algorithm in CAT(0) spaces}
\author{Lauren\c tiu Leu\c stean${}^{a,b}$ and Andrei Sipo\c s${}^{a,b}$\\[0.2cm]
\footnotesize ${}^a$Faculty of Mathematics and Computer Science, University of Bucharest,\\
\footnotesize Academiei 14, 010014 Bucharest, Romania\\[0.1cm]
\footnotesize ${}^b$Simion Stoilow Institute of Mathematics of the Romanian Academy,\\
\footnotesize P. O. Box 1-764, 014700 Bucharest, Romania\\[0.1cm]
\footnotesize E-mails: laurentiu.leustean@unibuc.ro, Andrei.Sipos@imar.ro\\
}
\date{}
\begin{document}

\maketitle
\begin{center}
{\em  Dedicated to the memory of Professor Solomon Marcus (1925-2016)}
\end{center}

\vspace*{2mm}

\begin{abstract}
\noindent We compute, using techniques originally introduced by Kohlenbach, the first author and Nicolae, uniform rates of metastability for the proximal point algorithm in the context of CAT(0) spaces (as first considered by Ba\v{c}\'ak), specifically for the case where the ambient space is totally bounded. This result is part of the program of proof mining, which aims to apply methods of mathematical logic with the purpose of extracting quantitative information out of ordinary mathematical proofs, which may not be necessarily constructive.\\

\noindent {\em Keywords:} Proof mining; Convex optimization;  Proximal point algorithm; CAT(0) spaces;  Metastability; Fej\'er monotonicity; Total boundedness.\\

\noindent  {\it Mathematics Subject Classification 2010}: 46N10, 47J25, 03F10
\end{abstract}

\section{Introduction}

The proximal point algorithm is a fundamental tool of convex optimization, going back to Martinet 
\cite{Mar70}, Rockafellar \cite{Roc76} and Br\'ezis and Lions \cite{BreLio78}. Since its 
inception, the schema turned out to be highly versatile, covering in its various developments, 
{\it inter alia}, the problems of finding zeros of monotone operators, minima of convex 
functions and fixed points of nonexpansive mappings. For a general introduction to the field 
in the context of Hilbert spaces, see the book of Bauschke and Combettes \cite{BauCom10}.

A recent breakthrough was achieved by Ba\v{c}\'ak \cite{Bac13}, who proved the weak convergence 
in complete CAT(0) spaces (that is, $\Delta$-convergence) of the variant of the algorithm used to 
find minima of convex, lower semicontinuous (lsc) proper functions. Let us detail the statement of his result. 
If $X$ is a complete CAT(0) space  and  $f: X \to (-\infty, +\infty]$  is a convex, lsc proper function that has minimizers,  
then, following Jost \cite{Jos95}, we may define its resolvent by the relation
$$J_f(x) := \displaystyle{\arg\!\min}_{y \in X} \left[ f(y) + \frac12 d^2(x,y)\right].$$
For such an $f$, a starting point $x \in X$, and a sequence of weights 
$(\gamma_n)_{n \in \mathbb{N}}$, the proximal point algorithm  $(x_n)_{n \in \mathbb{N}}$ is defined by setting:
\begin{center} $x_0:=x$, \quad  $x_{n+1} := J_{\gamma_n f} x_n$  for any $n\in\N$. 
\end{center}
Ba\v{c}\'ak's result then states that, 
conditional on the fact that  $\sum_{n=0}^\infty \gamma_n = \infty$, the sequence $(x_n)$  
converges weakly to a minimizer of $f$. As a consequence, one gets (see \cite[Remark 1.7]{Bac13})

\begin{theorem}\label{ppa-cat0-strong}
In the above hypotheses, assume, furthermore, that $X$ is a complete  locally compact CAT(0) 
space. Then  $\sxn$ converges strongly to a a minimizer of $f$. 
\end{theorem}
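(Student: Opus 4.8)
The plan is to bootstrap Ba\v{c}\'ak's $\Delta$-convergence theorem, quoted above, to strong convergence, using the Fej\'er monotonicity of the iteration together with the compactness afforded by local compactness.

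First I would record the Fej\'er monotonicity of $\sxn$ with respect to the set $F$ of minimizers of $f$ (nonempty by hypothesis). For any $\gamma>0$ the resolvent $J_{\gamma f}$ is nonexpansive, and a point $z\in X$ minimizes $f$ if and only if $J_{\gamma f}z=z$. Hence, for every $z\in F$ and every $n\in\N$,
\[
d(x_{n+1},z)=d(J_{\gamma_n f}x_n,\,J_{\gamma_n f}z)\le d(x_n,z),
\]
so $(d(x_n,z))_{n\in\N}$ is nonincreasing; in particular $\sxn$ is bounded.

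By Ba\v{c}\'ak's theorem, $\sxn$ then $\Delta$-converges to some $z^\ast\in F$. Since $X$ is complete and locally compact, the Hopf--Rinow theorem for geodesic spaces shows that $X$ is proper, i.e.\ its closed bounded subsets are compact; thus the bounded sequence $\sxn$ admits a subsequence $(x_{n_k})$ converging strongly to some $y\in X$. As strong convergence implies $\Delta$-convergence, $(x_{n_k})$ $\Delta$-converges to $y$; but, being a subsequence of a $\Delta$-convergent sequence, it also $\Delta$-converges to $z^\ast$, and uniqueness of $\Delta$-limits forces $y=z^\ast$. Therefore $d(x_{n_k},z^\ast)\to 0$, and since $(d(x_n,z^\ast))_{n\in\N}$ is nonincreasing, the whole sequence $d(x_n,z^\ast)$ tends to $0$; that is, $\sxn$ converges strongly to the minimizer $z^\ast$ of $f$.

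None of these steps is genuinely hard; the points demanding the most care are the passage from local compactness to properness of $X$ via Hopf--Rinow and the correct use, in the CAT(0) setting, of the nonexpansiveness of the resolvent and of the fixed-point characterization of minimizers. It is worth noting that precisely these ingredients --- Fej\'er monotonicity, $\Delta$-convergence to a minimizer, and compactness --- are the ones that will have to be rendered quantitative in the sequel in order to extract a uniform rate of metastability.
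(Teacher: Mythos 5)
The paper does not actually supply a proof of Theorem~\ref{ppa-cat0-strong}; it simply cites \cite[Remark 1.7]{Bac13}. Your argument is correct and is the standard way of upgrading $\Delta$-convergence to strong convergence: nonexpansiveness of $J_{\gamma_n f}$ (Proposition~\ref{ne}) together with $Fix(J_{\gamma_n f})=Argmin(f)$ (Corollary~\ref{F-FixJgamma}) gives Fej\'er monotonicity, so $(d(x_n,z))_n$ is nonincreasing for every minimizer $z$; Ba\v{c}\'ak's theorem gives $\Delta$-convergence to some $z^\ast\in Argmin(f)$; Hopf--Rinow for complete locally compact length spaces gives properness, hence a strongly convergent subsequence $(x_{n_k})$; uniqueness of $\Delta$-limits identifies the strong limit with $z^\ast$; and the monotone decrease of $d(x_n,z^\ast)$ together with $d(x_{n_k},z^\ast)\to 0$ forces the whole sequence to converge. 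This is in all likelihood exactly what the cited remark contains. It is also worth noting that your closing observation is right on target: the three ingredients you isolate --- Fej\'er monotonicity, approximate minimizers, and compactness --- are precisely the ones the paper subsequently renders quantitative (as uniform Fej\'er monotonicity, an approximate $F$-point bound, and a modulus of total boundedness) in order to apply the machinery of \cite{KohLeuNicXX} and extract the rate of metastability in Theorem~\ref{main-quant-thm}.
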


The proof of Theorem \ref{ppa-cat0-strong} is what we are going to build upon, roughly, in our quantitative analysis 
from the viewpoint of proof mining.

Proof mining is a subfield of applied logic that seeks to use proof interpretations, 
like G\"odel's Dialectica or functional interpretation \cite{Goe58}, 
originally developed with the purpose of giving consistency 
arguments for systems of arithmetic, in order to extract quantitative information out 
of proofs in ordinary mathematics. Under the name of ``proof unwinding'', it was first 
proposed as a viable research program by G. Kreisel in the 1950s and after several decades 
of sporadic advances (one of the most significant being H. Luckhardt's 1989 analysis \cite{Luc89} 
of the proof of Roth's theorem on diophantine approximations) it was given maturity in the 1990s 
and the 2000s by U. Kohlenbach and his collaborators. The project has culminated into  
the  general logical metatheorems developed by Kohlenbach \cite{Koh05} and by 
Gerhardy and Kohlenbach \cite{GerKoh08} for proofs in metric, (uniformly convex) normed and 
inner product spaces, as well as geodesic spaces like $W$-hyperbolic spaces and CAT(0) spaces.
These logical metatheorems were extended to other classes of fundamental spaces in nonlinear and 
functional analysis, optimization, geometric group theory and geodesic geometry: 
Gromov hyperbolic spaces, $\R$-trees and a class of uniformly convex geodesic spaces \cite{Leu06}, 
completions of metric and normed spaces \cite{Koh08}, totally bounded metric spaces \cite{Koh08,KohLeuNicXX}, 
uniformly smooth normed spaces \cite{KohLeu12},  Banach lattices and $C(K)$ spaces \cite{GunKoh16}, 
$L^p$-spaces \cite{GunKoh16,SipXX} and CAT($\kappa$) spaces \cite{KohNic17}.
These logical metatheorems guarantee that from proofs of $\forall\exists$-sentences (satisfying some conditions) in formal systems associated to 
such abstract spaces $X$,
one can extract effective uniform bounds on existentially quantified variables.
Kohlenbach's monograph from 2008 \cite{Koh08} covers the major results in the field until then, while a 
survey of recent developments is \cite{Koh16}.

The canonical example of an existentially quantified variable in ordinary mathematics comes from the definition 
of the limit of a sequence in a metric space $(X,d)$. If $\sxn$ is a sequence in $X$ and $x\in X$, then 
$\lim_{n\to\infty}x_n=x$ if and only if

$$\forall k \in {\mathbb N} \,\exists N \in {\mathbb N} \,\forall n \geq N\, \left(d(x_n,x) 
\leq \frac1{k+1}\right).$$

A witness for this existentially quantified $N$, also called {\it rate of convergence} 
for the sequence, as it will be defined in more detail further below, would consist of a 
formula giving it in terms of the $k$. Unfortunately, as the sentence above has three 
alternating quantifiers in a row (i.e. $\forall\exists\forall$), the techniques of proof mining 
preclude the extraction of such a computable rate if the proof is non-constructive in 
the sense of using at least once the law of excluded middle (one can show that the existence 
of a general procedure for these cases would contradict the impossibility of the halting problem). 
Four avenues have generally been tried so far in proof mining, if the convergence of a sequence 
was under discussion. The first one is the extraction of the full rate of convergence in the 
rare case that the proof is fully or at least partially constructive. The second one is to 
settle for a weaker property, like the limit inferior, which may have a tractable 
$\forall\exists$ form (and if the sequence is nonincreasing, the extracted 
modulus of liminf would also be a rate of convergence). The third one is to use some uniqueness 
properties of the limit in order to extract the rate of convergence from a distantly related property 
like the rate of asymptotic regularity. Finally, the fourth way is what we are going to focus on here. 
It consists of considering instead of convergence  the Cauchy property of the sequence 
$$\forall k \in {\mathbb N} \,\exists N \in {\mathbb N} \,\forall p\in {\mathbb N}\, 
\left(d(x_N,x_{N+p})\leq \frac1{k+1}\right)$$
and replacing it with an equivalent formulation (known in logic as its Herbrand normal form or 
its Kreisel no-counterexample interpretation),  called {\em metastability} by Tao \cite{Tao07,Tao08}.
The following sentence expresses the metastability of the sequence 
above:
$$\forall k \in {\mathbb N} \,\forall g: {\mathbb N} \to {\mathbb N} \,\exists N \in {\mathbb N} 
\,\forall i,j \in [N, N+g(N)]\,\,\, \left(d(x_i,x_j) \leq \frac1{k+1}\right).$$
It is immediately seen that this sentence is of a reduced $\forall\exists$ logical complexity. 
It is, however, a simple exercise, to check that it is classically (but not intuitionistically) 
equivalent to the assertion that the sequence under discussion is Cauchy. Therefore, one can 
now say that the fourth way is focused on obtaining a {\it rate of metastability} for the 
sequence, i.e. a mapping $\Phi:\N\times\N^\N\to \N$ satisfying, for all $k\in\N$ and all $g: {\mathbb N} \to {\mathbb N}$,
\beq
\exists N \leq \Phi(k,g)\,\forall i,j \in [N, N+g(N)]\,\,\, \left(d(x_i,x_j) \leq \frac1{k+1}\right).
\eeq

In a recent paper, Kohlenbach, the first author and Nicolae \cite{KohLeuNicXX} have 
studied a general line of argument used in convergence proofs in nonlinear analysis and convex 
optimization. Specifically, it is often the case that an iterative sequence is proven to be 
convergent to a point in a certain set $F$ (e.g. the set of fixed points of an operator using 
which the sequence was constructed) if it sits inside a compact space, it is {\em Fej\'er monotone}
with respect to $F$ (that is, for all $q \in F$ and all $n \in \mathbb{N}$, $d(x_{n+1},q)\leq d(x_n,q)$) 
and it has ``approximate $F$-points'', i.e. points which are, in a sense, near $F$. 
The main result in  \cite{KohLeuNicXX}  is that all this can be made effective. For that to work, however, 
the three hypotheses must also be transformed into a quantitative form. A ``modulus of total boundedness" 
witnesses the space being compact. For the other two properties, one must formulate what exactly does 
it mean for a point to be ``near'' to $F$. This is done in terms of an approximation 
$F = \bigcap_{k \in \mathbb{N}} AF_k$, which helps formulate both the ``modulus of uniform 
Fej\'er monotonicity'' and the ``approximate $F$-point bound''. The choice of an approximation 
to $F$, as well as the computation of these moduli, has been done in \cite{KohLeuNicXX,LeuRadSip16,KohLopNic17} 
for some classical iterations associated to important classes of mappings and operators.

\mbox{}

In this paper we apply the techniques developed in \cite{KohLeuNicXX} to obtain a quantitative version 
of Theorem \ref{ppa-cat0-strong}, providing an effective uniform rate of metastability for 
the proximal point algorithm in totally bounded CAT(0) spaces. The next section will give some 
preliminaries on the proximal point algorithm, while the last section of the paper is dedicated to 
the proof of our main quantitative result, Theorem \ref{main-quant-thm}.

We finish this Introduction with a recall of definitions from \cite{KohLeuNicXX} and quantitative 
notions that will be used throughout the paper. We point out, first, that  ${\mathbb N}=\{0,1,2,\ldots\}$ and 
that we denote $[m,n]=\{m,m+1,\ldots,n\}$ for any $m, n\in{\mathbb N}$ with $m\leq n$. 

Let $(X,d)$ be a metric space. For any mapping $T:X\to X$ we denote by $Fix(T)$ the set of fixed points of $T$.

 A {\it modulus of total boundedness} for  $X$ is a function $\alpha : \mathbb{N} \to \mathbb{N}$ 
such that for any $k \in \mathbb{N}$ and any sequence $\sxn$ in $X$ there exist $i<j$ in $[0,\alpha(k)]$ such that
$$d(x_i,x_j) \leq \frac1{k+1}.$$
This notion was first used in \cite{Ger08} to analyze, using proof mining methods, the 
Furstenberg-Weiss proof of the Multiple Birkhoff Recurrence Theorem. One can easily see 
that $X$ is totally bounded if and only if it has a modulus 
of total boundedness.

Let $F\subseteq X$.  We say that a family $(AF_k)_{k \in \mathbb{N}}$ of subsets of $X$ is an {\it approximation} to  $F$ if
$$F = \bigcap_{k \in \mathbb{N}} AF_k \quad \text{ and } AF_{k+1} \subseteq AF_k \text{~for all~} k\in\N.$$
Elements of $AF_k$ are also called $k$-approximate $F$-points.

\begin{definition}\cite{KohLeuNicXX}
Let $F\subseteq X$ be a set with an approximation $(AF_k)$.
\begin{enumerate}[(i)]
\item $F$ is  {\it uniformly closed} with respect to  $(AF_k)$ with moduli $\delta_F, \omega_F : \mathbb{N} \to \mathbb{N}$ if for all $k \in \mathbb{N}$ and 
all $p, q \in X$ we have that 
\begin{center}
$q \in  AF_{\delta_F(k)}$ and $\displaystyle d(p,q) \leq \frac1{\omega_F(k)+1}$ \quad imply\quad  $p \in AF_k.$
\end{center}
\item $\sxn$ is  {\it uniformly Fej\'er monotone} with respect to $(AF_k)$ with modulus $\chi$ if for all $n,m,r \in \mathbb{N}$, all $p \in AF_{\chi(n,m,r)}$ and all $l \leq m$ we have that
$$d(x_{n+l},p) < d(x_n,p) + \frac1{r+1}.$$
\item $\sxn$ has {\it  approximate $F$-points} with respect to $(AF_k)$ with modulus $\Phi$ (which is taken to be nondecreasing) if for all $k \in \mathbb{N}$ there is an $N \leq \Phi(k)$ such that $x_N \in AF_k$.
\end{enumerate}
\end{definition}
We refer to \cite[Sections 3 and 4]{KohLeuNicXX} for details and intuitions behind the above definitions. 
We remark that one can get  nondecreasing moduli using the following  transformation. For any $f:\mathbb{N} \to \mathbb{N}$, one defines $f^M: \mathbb{N} \to \mathbb{N}$ by
$f^M(n) := \max_{i \leq n} f(i).$
Then $f^M$ is nondecreasing and for any $n$, we have that $f(n) \leq f^M(n)$.

We now give some notions that are customary in quantitatively expressing some basic properties 
of real-valued sequences. Let $(a_n)_{n\in{\mathbb N}}$ be a sequence of nonnegative real 
numbers. If $(a_n)$ converges to $0$, then a {\it rate of convergence} for $(a_n)$ is a mapping 
$\beta:{\mathbb N}\to{\mathbb N}$ such that  for all $k\in\N$,
$$\forall n\geq \beta(k)\,\,\,  \left(a_n \leq \frac1{k+1}\right).$$
If the series $\sum\limits_{n=0}^\infty a_n$ diverges, then a function $\theta:{\mathbb N}\to{\mathbb N}$ is called a {\it rate of divergence} of the series if for all $P \in {\mathbb N}$ we have that
$$\sum_{n=0}^{\theta(P)}a_n \geq P.$$
A {\it modulus of liminf} for $(a_n)$ is a mapping $\Delta:{\mathbb N}\times {\mathbb N}\to{\mathbb N}$, satisfying, for all $k,L\in\N$,
$$ \exists N \in [L,\Delta(k,L)] \,\,\,\,  \left( a_N  \leq \frac1{k+1}\right).$$
Such a modulus exists if and only if $\displaystyle \liminf_{n\to\infty} a_n=0$.

More generally, let $(b_n)_{n\in{\mathbb N}}$ be a sequence of real numbers and $b\in\R$. 
If  $(b_n)$ converges to $b$, then a {\it a rate of convergence} of $(b_n)$ is a mapping 
$\beta:{\mathbb N}\to{\mathbb N}$ such that  for all $k\in\N$,
\beq
\forall n\geq \beta(k)\,\,\,  \left(|b_n-b|\leq \frac1{k+1}\right).\label{def-rate-conv}
\eeq
Thus, a rate of convergence of $(b_n)$  coincides with a rate of convergence  of the sequence 
$(|b_n-b|)$ of  nonnegative reals.

\section{Preliminaries on the proximal point algorithm}

In the sequel, $X$ is a CAT(0) space and  $f: X \to (-\infty, +\infty]$ is a convex, lower semicontinuous (lsc) 
proper function.  Let us recall that a minimizer of $f$ is a point $x\in X$ such that $f(x)=\inf_{y\in X} f(y)$. We denote  the set of minimizers of $f$ by $Argmin(f)$ ans we assume that $Argmin(f)$ is nonempty.  

The {\it proximal point mapping} or the {\em (Moreau-Yosida) resolvent}, as first introduced 
for CAT(0) spaces by Jost \cite{Jos95}, is a tool for finding minimizers of  such functions. 
For $\gamma>0$, the {\it resolvent} 
(or the {\it proximal mapping}) of $f$ of order $\gamma$ is the map $J^\gamma_f : X \to X$, 
defined, for any $x \in X$, by the following relation
$$\resgf(x) := {\arg\!\min}_{y \in X} \left[ \gamma f(y) + \frac12 d^2(x,y)\right].$$
This is the definition from \cite{Bac13},  as the factor of $2$ does not appear in the original 
paper of Jost, but this is, obviously, insignificant.
By \cite[Lemma 2]{Jos95},  the operator $\resgf$ is well-defined. We shall denote 
$\resuf$ simply by $J_f$. Then, for all $\gamma>0$ and for all $x\in X$,
\bua 
J_{\gamma f}(x) &=& \resgf(x)={\arg\!\min}_{y \in X} \left[ \gamma f(y) + 
\frac12 d^2(x,y)\right] \\
 &=&{\arg\!\min}_{y \in X} \left[ f(y) + \frac1{2\gamma} d^2(x,y)\right].
 \eua

The following property was also proved in \cite{Jos95}.

\begin{proposition}[{\cite[Lemma 4]{Jos95}}]\label{ne}
For any $\gamma >0 $, $J_{\gamma f}$ is nonexpansive, that is, for all $x,y\in X$,
\[d(J_{\gamma f}x,J_{\gamma f}y)\leq d(x,y).\]
\end{proposition}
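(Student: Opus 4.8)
The plan is to reconstruct Jost's argument from the variational characterisation of the resolvent together with the two standard comparison inequalities available in a CAT(0) space. Since $\gamma f$ is again convex, lsc and proper, it suffices to treat the case $\gamma = 1$: write $g := \gamma f$, fix $x,y \in X$, put $u := J_g x$ and $v := J_g y$, and set $F_x(w) := g(w) + \tfrac12 d^2(x,w)$, so that by the definition of the resolvent and \cite[Lemma 2]{Jos95}, $u$ is the (unique) minimiser of $F_x$ and $v$ the minimiser of $F_y$.

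First I would establish the fundamental resolvent inequality
\begin{equation}
g(u) + \tfrac12 d^2(x,u) + \tfrac12 d^2(u,w) \;\le\; g(w) + \tfrac12 d^2(x,w) \qquad\text{for all } w\in X. \label{ne-var-ineq}
\end{equation}
This is clear when $g(w) = +\infty$, so assume $g(w) < +\infty$ (note that $g(u) < +\infty$ too, since $F_x(u) = \min F_x < +\infty$ because $g$ is proper). For $t \in (0,1]$, let $w_t$ be the point on the geodesic $[u,w]$ with $d(u,w_t) = t\, d(u,w)$. Minimality of $u$ gives $F_x(u) \le F_x(w_t)$, and bounding the right-hand side using convexity of $g$ along $[u,w]$ and the CAT(0) inequality $d^2(x,w_t) \le (1-t)\,d^2(x,u) + t\,d^2(x,w) - t(1-t)\,d^2(u,w)$ yields
\[ F_x(u) \;\le\; (1-t)F_x(u) + t\,F_x(w) - \tfrac{t(1-t)}{2}\,d^2(u,w). \]
Subtracting $(1-t)F_x(u)$, dividing by $t>0$ and letting $t \to 0^+$ gives \eqref{ne-var-ineq}.

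Next I would apply \eqref{ne-var-ineq} twice: once with $w := v$, and once with the roles of $(x,u)$ and $(y,v)$ interchanged and $w := u$. Adding the two inequalities, the terms $g(u)$ and $g(v)$ cancel, and after multiplying by $2$ one is left with the purely metric inequality $d^2(x,u) + d^2(y,v) + 2\,d^2(u,v) \le d^2(x,v) + d^2(y,u)$. To finish, I would invoke the Cauchy--Schwarz inequality for the Berg--Nikolaev quasilinearisation, valid in any CAT(0) space, which in four-point form reads $d^2(x,v) + d^2(y,u) \le d^2(x,u) + d^2(y,v) + 2\,d(x,y)\,d(u,v)$; combining it with the previous inequality and cancelling $d^2(x,u) + d^2(y,v)$ leaves $2\,d^2(u,v) \le 2\,d(x,y)\,d(u,v)$, hence $d(u,v) \le d(x,y)$.

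The conceptual heart of the argument is the reduction, via the two applications of \eqref{ne-var-ineq}, to a four-point metric inequality that holds automatically in CAT(0) spaces. The only step requiring genuine care is the limit in \eqref{ne-var-ineq}: one must check that $u$ lies in the effective domain of $g$ (it does) and that the convexity and CAT(0) estimates along $[u,w]$ apply as stated; but since no interchange of limits is involved --- only an elementary division by $t$ and passage to $t \to 0^+$ --- this is routine, and all the genuine geometric content is confined to the two classical CAT(0) comparison inequalities used above.
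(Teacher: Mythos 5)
Your argument is correct and complete. The one point that genuinely needs care --- cancelling $g(u)$ and $g(v)$ after adding the two instances of the resolvent inequality --- you handle properly by noting that $u$ and $v$ lie in the effective domain of $g$ (and $g$ never takes the value $-\infty$), and the two geometric ingredients you use, the (CN)-type inequality $d^2(x,w_t)\leq (1-t)d^2(x,u)+t\,d^2(x,w)-t(1-t)d^2(u,w)$ and the four-point Cauchy--Schwarz inequality of Berg--Nikolaev, are both valid in every CAT(0) space, so the chain $2d^2(u,v)\leq 2d(x,y)d(u,v)$ and hence $d(u,v)\leq d(x,y)$ goes through. Note, however, that the paper itself does not prove this proposition at all: it is imported verbatim as Lemma 4 of Jost's 1995 paper, so there is no in-paper argument to compare against. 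What you have written is the now-standard modern proof (two applications of the variational inequality characterizing the proximal point, followed by quasilinearization); Jost's original argument predates the Berg--Nikolaev quasilinearization formalism and is organized differently, so your route should be viewed as an independent, self-contained verification rather than a reconstruction of the cited proof. If you wanted to avoid invoking Berg--Nikolaev as a black box, the four-point inequality you need can also be obtained from Reshetnyak's majorization (or by a direct comparison-quadrilateral argument), but as stated your proof is sound.
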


We note that the definition of the proximal point mapping is motivated by the following proposition. 

\begin{proposition}\label{fix-min}
Let $x\in X$. Then  $x$ is a minimizer of $f$  if and only if  $x$ is a fixed point of $J_f$. 
\end{proposition}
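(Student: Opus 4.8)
The plan is to reduce both implications to the defining property of the resolvent: by \cite[Lemma 2]{Jos95}, for each $x\in X$ the point $J_f(x)$ is the \emph{unique} element of $X$ at which the auxiliary function $\Psi_x(y) := f(y) + \frac12 d^2(x,y)$ attains its infimum over $X$. So the whole proof becomes a matter of comparing values of $\Psi_x$.

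For the direction ``minimizer $\Rightarrow$ fixed point'', I would argue as follows. Suppose $x \in Argmin(f)$; since $f$ is proper, $f(x) = \inf_{y\in X} f(y) \in \R$, and for every $y \in X$ we have $f(y) \ge f(x)$, whence
\[ \Psi_x(y) = f(y) + \frac12 d^2(x,y) \ge f(x) = f(x) + \frac12 d^2(x,x) = \Psi_x(x). \]
Thus $x$ minimizes $\Psi_x$, and by uniqueness of the minimizer we conclude $J_f(x) = x$. This step is a one-line observation.

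The converse, ``fixed point $\Rightarrow$ minimizer'', is where I expect the only real work. Assume $J_f(x) = x$, so $x$ minimizes $\Psi_x$; in particular $f(x) = \Psi_x(x) \le \Psi_x(y_0) < +\infty$ for any $y_0$ in the (nonempty) effective domain of $f$. Suppose, for contradiction, that $x \notin Argmin(f)$, i.e. there is $z \in X$ with $f(z) < f(x)$ (note $z \ne x$, hence $d(x,z) > 0$). For $t \in [0,1]$ let $z_t$ be the point of the geodesic segment from $x$ to $z$ with $d(x,z_t) = t\,d(x,z)$, so $z_0 = x$. Using convexity of $f$ along this geodesic together with $d^2(x,z_t) = t^2 d^2(x,z)$,
\[ \Psi_x(z_t) \le (1-t)f(x) + t f(z) + \frac{t^2}{2}\,d^2(x,z), \]
and therefore
\[ \Psi_x(z_t) - \Psi_x(x) \le t\Bigl( f(z) - f(x) + \frac{t}{2}\,d^2(x,z) \Bigr). \]
Since $f(z) - f(x) < 0$, the bracket is negative for every $t$ with $0 < t < \min\{1,\, 2(f(x)-f(z))/d^2(x,z)\}$; for such $t$ we get $\Psi_x(z_t) < \Psi_x(x)$, contradicting the minimality of $x$. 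Hence $f(z) \ge f(x)$ for all $z \in X$, i.e. $x \in Argmin(f)$.

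In summary, the main (and essentially only) obstacle is this converse implication, which hinges on playing the geodesic convexity of $f$ against the quadratic growth of $t \mapsto d^2(x,z_t)$; once the displayed estimate is written down, the contradiction is immediate. Beyond that, the only things to check are the degenerate cases $f(x) = +\infty$ and $d(x,z) = 0$, both of which are ruled out as indicated above.
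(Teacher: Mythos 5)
Your proposal is correct and is essentially the paper's own argument: both directions rest on comparing values of $y \mapsto f(y)+\frac12 d^2(x,y)$, and the converse uses the same key estimate combining convexity of $f$ along the geodesic from $x$ to a competitor with the quadratic term $\frac{t^2}{2}d^2(x,\cdot)$. The only difference is cosmetic: the paper divides by $t$ and lets $t\to 0$ to get $f(x)\leq f(w)$ directly, whereas you run the same inequality as a proof by contradiction with an explicit small $t$.
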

\begin{proof}
Suppose first that $x$ is a minimizer of $f$. It follows that for all $y$,
$$f(x) + \frac12 d^2(x,x) =f(x)\leq f(y) \leq f(y) + \frac12 d^2(x,y),$$
therefore $x$ is also the argmin of the right hand side w.r.t. $y$ -- that is, $x=J_f(x)$.

Suppose now that $J_f(x)=x$. Then for all $y \in X$, as before,
$$f(x)\leq f(y) + \frac12 d^2(x,y).$$
Let $w \in X$. Using the fact that $f$ is convex, we get, for any $t \in (0,1)$,
\begin{eqnarray*}
f(x) &\leq & f((1-t)x+tw) + \frac12d^2(x,(1-t)x+tw) \\
&\leq & (1-t) f(x) + tf(w) +\frac12d^2(x,(1-t)x+tw)\\
&=& (1-t) f(x) + tf(w) + \frac12 t^2 d^2(x,w).
\end{eqnarray*}
Subtracting $(1-t)f(x)$ and dividing by $t$, we obtain that
$$f(x) \leq f(w) + \frac12 td^2(x,w),$$
and by letting $t \to 0$,  it follows that $f(x) \leq f(w)$. Since $w$ was chosen arbitrarily, 
we get that $x$ is a minimizer of $f$.
\end{proof}

Since, trivially, $Argmin(\gamma f)=Argmin(f)$, we get that 

\begin{corollary}\label{F-FixJgamma}
For any $\gamma > 0$, $Fix(J_{\gamma f}) = Fix(J_f)=Argmin(f)$.
\end{corollary}

We may now proceed to study the algorithm in itself.  Let  $(\gamma_n)_{n \in \mathbb{N}}$ 
be a sequence in $ (0, \infty)$.
The {\it proximal point algorithm} $(x_n)_{n\in\N}$ starting with $x\in X$ is defined as follows:
\[x_0 :=x, \qquad x_{n+1} := J_{\gamma_n f} x_n \text{~for all~} n\in\N.\]

Let us give some useful properties of the sequence $\sxn$. 

\begin{lemma}
For all $n ,m\in \mathbb{N}$ and all $p \in X$,
\begin{eqnarray}
d(x_{n+1},p) &\leq& d(x_n, p) + d(p, J_{\gamma_n f} p), \label{dxnunu-p}\\
d(x_{n+m},p) &\leq&  d(x_n, p) + \sum_{i=n}^{i=n+m-1} d(p, J_{\gamma_i f} p) \label{dxnm-p}.
\end{eqnarray}
\end{lemma}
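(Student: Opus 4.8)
The plan is to derive \eqref{dxnunu-p} directly from the nonexpansiveness of the resolvent (Proposition \ref{ne}) together with the triangle inequality, and then to obtain \eqref{dxnm-p} by an easy induction on $m$.

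For the first inequality, I would start from the defining relation $x_{n+1} = J_{\gamma_n f} x_n$ and insert the auxiliary point $J_{\gamma_n f} p$ via the triangle inequality:
$$d(x_{n+1},p) \leq d(J_{\gamma_n f} x_n, J_{\gamma_n f} p) + d(J_{\gamma_n f} p, p).$$
Since $J_{\gamma_n f}$ is nonexpansive by Proposition \ref{ne}, the first term on the right-hand side is bounded by $d(x_n,p)$, and this immediately yields \eqref{dxnunu-p}.

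For \eqref{dxnm-p}, I would fix $n$ and argue by induction on $m$. The case $m=0$ is trivial, since the sum on the right is empty, and $m=1$ is precisely \eqref{dxnunu-p}. For the inductive step, I would apply \eqref{dxnunu-p} with $n$ replaced by $n+m$ to obtain
$$d(x_{n+m+1},p) \leq d(x_{n+m},p) + d(p, J_{\gamma_{n+m} f} p),$$
and then bound $d(x_{n+m},p)$ using the induction hypothesis; collecting the terms produces the sum running from $i=n$ to $i=n+m$, as required.

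I do not expect any genuine obstacle here: the only nonelementary ingredient is the nonexpansiveness of the resolvents, which is already available as Proposition \ref{ne}, and everything else is the triangle inequality and a routine induction. The single point that deserves a moment of care is the bookkeeping of the summation index in the inductive step, to ensure that the freshly produced term $d(p, J_{\gamma_{n+m} f} p)$ is exactly the $(n+m)$-th summand of the telescoped bound.
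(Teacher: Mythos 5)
Your argument is correct and coincides with the paper's own proof: the first inequality via the triangle inequality through the auxiliary point $J_{\gamma_n f}p$ combined with nonexpansiveness (Proposition \ref{ne}), and the second by the routine induction on $m$ that the paper leaves implicit.
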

\begin{proof}
We have that
\bua
 d(x_{n+1},p) &=& d(J_{\gamma_n f} x_n, p) \leq d(J_{\gamma_n f} x_n, J_{\gamma_n f} p) + 
d(J_{\gamma_n f} p, p) \\
&\leq & d(x_n, p) + d(p, J_{\gamma_n f} p).
\eua
\eqref{dxnm-p} follows immediately by induction on $m$.
\end{proof}

The following lemma contains results from \cite{Bac13}.

\begin{lemma}\label{fundam}
\be[(i)]
\item The sequence $\sfxn$ is nonincreasing.
\item For all $n\in\N$ and all $p\in Argmin(f)$,
\begin{eqnarray}
\!\!\!\!\!\! 2\gamma_n(f(x_{n+1}) - \min(f))&\leq & d^2(x_n,p)-d^2(x_{n+1},p)-d^2(x_n,x_{n+1}) \label{d2xnpp+1-Bac}\\
\!\!\!\!\!\! d^2(x_n,x_{n+1}) &\leq & d^2(x_n,p) - d^2(x_{n+1},p)  \label{d2xnpp+1}  \\
\!\!\!\!\!\! f(x_{n+1}) - \min(f) &\leq & \frac{d^2(x,p)}{2\sum_{i=0}^{n} \gamma_i}\label{fxnlambdanp}
\end{eqnarray}
\ee
\end{lemma}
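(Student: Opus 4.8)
The plan is to lean on the variational characterization of the resolvent. Since $x_{n+1}=J_{\gamma_n f}x_n$ is by definition the (unique) minimizer of $y\mapsto \gamma_n f(y)+\frac12 d^2(x_n,y)$, every $y\in X$ satisfies
$$\gamma_n f(x_{n+1})+\frac12 d^2(x_n,x_{n+1})\leq \gamma_n f(y)+\frac12 d^2(x_n,y).$$
For part (i) I would simply take $y:=x_n$ here: since $d(x_n,x_n)=0$ this yields $\gamma_n f(x_{n+1})+\frac12 d^2(x_n,x_{n+1})\leq \gamma_n f(x_n)$, and as $\gamma_n>0$ and $d^2\geq 0$ we obtain $f(x_{n+1})\leq f(x_n)$, so $\sfxn$ is nonincreasing.

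The core of part (ii) is inequality \eqref{d2xnpp+1-Bac}, and for it I would test the displayed inequality above on the point $y_t:=(1-t)x_{n+1}+tp$ lying on the geodesic segment from $x_{n+1}$ to $p$, for an arbitrary $t\in(0,1)$. Two facts feed in. First, convexity of $f$ together with $p\in Argmin(f)$ gives $f(y_t)\leq (1-t)f(x_{n+1})+tf(p)=(1-t)f(x_{n+1})+t\min(f)$. Second, the defining quadratic inequality of CAT(0) spaces (the one already invoked, in the special case $z=x$, in the proof of Proposition \ref{fix-min}) gives
$$d^2(x_n,y_t)\leq (1-t)d^2(x_n,x_{n+1})+t\,d^2(x_n,p)-t(1-t)d^2(x_{n+1},p).$$
Substituting both estimates and simplifying, the terms proportional to $1-t$ combine with the matching terms on the left-hand side so that everything becomes a multiple of $t$; dividing by $t>0$ and letting $t\to 0^+$ produces exactly \eqref{d2xnpp+1-Bac}. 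Inequality \eqref{d2xnpp+1} is then immediate, since $f(x_{n+1})\geq\min(f)$ forces the left-hand side of \eqref{d2xnpp+1-Bac} to be nonnegative.

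Finally, for \eqref{fxnlambdanp} I would sum \eqref{d2xnpp+1-Bac} over $i=0,1,\dots,n$ in place of the single index $n$: on the right-hand side the differences $d^2(x_i,p)-d^2(x_{i+1},p)$ telescope, and discarding the nonnegative quantities $d^2(x_{n+1},p)$ and $\sum_{i=0}^n d^2(x_i,x_{i+1})$ leaves $\sum_{i=0}^n 2\gamma_i(f(x_{i+1})-\min(f))\leq d^2(x_0,p)=d^2(x,p)$. By part (i), $f(x_{i+1})\geq f(x_{n+1})$ for every $i\leq n$, and since $f(x_{n+1})-\min(f)\geq 0$ we may bound the left-hand side below by $\bigl(f(x_{n+1})-\min(f)\bigr)\cdot 2\sum_{i=0}^n\gamma_i$; rearranging (using $\gamma_i>0$) gives the claim. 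I expect the only genuinely substantive step to be the geodesic-perturbation argument for \eqref{d2xnpp+1-Bac}, which is where the CAT(0) geometry is actually used; the remainder is telescoping, monotonicity and positivity bookkeeping.
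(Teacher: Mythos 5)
Your proof is correct. Part (i) is verbatim the paper's argument: test the variational inequality defining $x_{n+1}=J_{\gamma_n f}x_n$ at $y=x_n$. For part (ii), however, you take a genuinely different route from the paper, which gives no proof at all: it simply imports \eqref{d2xnpp+1-Bac} and \eqref{fxnlambdanp} from Ba\v{c}\'ak's paper \cite{Bac13} (noting the normalization $\min(f)=0$ and the index shift between his $\lambda_k$ and the $\gamma_{k-1}$ here), and observes that \eqref{d2xnpp+1} is an immediate consequence. What you do instead is reconstruct the underlying argument: testing the resolvent inequality at the geodesic point $(1-t)x_{n+1}+tp$, combining convexity of $f$ with the CAT(0) quadratic inequality $d^2(x_n,y_t)\leq(1-t)d^2(x_n,x_{n+1})+t\,d^2(x_n,p)-t(1-t)d^2(x_{n+1},p)$, dividing by $t$ and letting $t\to 0^+$ does yield exactly \eqref{d2xnpp+1-Bac} (I checked the bookkeeping: after cancelling the $(1-t)$-terms everything is indeed a multiple of $t$), and your telescoping of \eqref{d2xnpp+1-Bac} over $i=0,\dots,n$ together with the monotonicity from (i) and $\gamma_i>0$ gives \eqref{fxnlambdanp}; this is essentially Ba\v{c}\'ak's (and G\"uler's) original derivation. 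What your version buys is self-containedness — the CAT(0) geometry enters visibly and only in the geodesic-perturbation step, the same comparison inequality that appears (as an equality along a geodesic) in the proof of Proposition \ref{fix-min} — whereas the paper's citation-based treatment buys brevity at the cost of sending the reader to \cite{Bac13} and its differing conventions.
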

\begin{proof}
\be[(i)]
\item This is used without proof in \cite{Bac13}, and hence we shall justify it. Let $n \in \mathbb{N}$.  
By the definition of $J_{\gamma_n f}$ and considering that $x_{n+1} = J_{\gamma_n f}x_n$, we have that:
$$ \gamma_n f(x_{n+1}) + \frac12 d^2(x_{n},x_{n+1}) \leq \gamma_nf(x_{n}) + \frac12 d^2(x_{n},x_{n}) = \gamma_nf(x_{n}),$$
and so,  $  \gamma_n f(x_{n+1}) \leq \gamma_nf(x_{n})$, 
hence $f(x_{n+1}) \leq f(x_n)$.
\item  With the assumption made in \cite{Bac13} that $\min(f)=0$, \eqref{d2xnpp+1-Bac} 
is the  last inequality in the proof of (7) from \cite{Bac13}, while \eqref{fxnlambdanp} is  obtained from the inequality before (8) 
in \cite{Bac13}.  Note also that $\lambda_k$ in \cite{Bac13}  corresponds to our $\gamma_{k-1}$. 
\eqref{d2xnpp+1}  follows immediately from \eqref{d2xnpp+1-Bac}.
\ee
\end{proof}

We finish this section with two effective results on the behaviour of the proximal point algorithm, 
results that will be also used in the next section to get our main quantitative theorem.

\begin{lemma}\label{rates}
Let $b\in \mathbb{R}$  be such that $d(x,p)\leq b$ for some $p\in Argmin(f)$.
\begin{enumerate}[(i)]
\item\label{rates-liminf}  $\displaystyle\liminf_{n \to \infty} d(x_n,x_{n+1}) = 0$ with modulus of liminf 
\beq
\Delta_b(k,L):=\lceil b^2(k+1)^2 \rceil + L - 1. \label{modliminf-xnxn+1}
\eeq
\item\label{rates-lim} Assume that $\sum_{n=0}^\infty \gamma_n = \infty$ with rate of divergence $\theta$. Then 
$\displaystyle\lim_{n \to \infty} f(x_n) = \min(f)$, with a (nondecreasing) rate of convergence
\beq
\beta_{b, \theta}(k):=\theta^M(\lceil b^2(k+1)/2 \rceil) +1.\label{rate-fxn}
\eeq

\end{enumerate}
\end{lemma}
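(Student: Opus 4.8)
Both rates come out directly from the inequalities assembled in Lemma \ref{fundam}, with part (\ref{rates-liminf}) needing an extra telescoping step and a contradiction argument, and part (\ref{rates-lim}) needing only the definition of a rate of divergence together with the monotonicity of partial sums.

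\textbf{Part (\ref{rates-liminf}).} First I would note that \eqref{d2xnpp+1} forces the sequence $(d^2(x_n,p))_n$ to be nonincreasing, so summing \eqref{d2xnpp+1} telescopes: for every $N \geq L$,
$$\sum_{n=L}^{N} d^2(x_n,x_{n+1}) \leq d^2(x_L,p) - d^2(x_{N+1},p) \leq d^2(x_L,p) \leq d^2(x_0,p) = d^2(x,p) \leq b^2 .$$
Then I argue by contradiction: if $d(x_n,x_{n+1}) > \frac{1}{k+1}$ held for every $n \in [L,\Delta_b(k,L)]$, then taking $N = \Delta_b(k,L)$ the sum on the left has $\Delta_b(k,L) - L + 1 = \lceil b^2(k+1)^2\rceil \geq b^2(k+1)^2$ terms, each exceeding $\frac{1}{(k+1)^2}$, so the left-hand side would exceed $b^2$, contradicting the displayed bound. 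Hence some $N \in [L,\Delta_b(k,L)]$ satisfies $d(x_N,x_{N+1}) \leq \frac{1}{k+1}$, which is exactly the modulus-of-liminf property. (One should dispose of the degenerate case $b = 0$ separately, where $d(x,p) = 0$ gives $x \in Argmin(f)$, hence $x_n = x$ for all $n$ by Corollary \ref{F-FixJgamma} and the claim is trivial; so for the interval $[L,\Delta_b(k,L)]$ to be nonempty one may harmlessly assume $b > 0$.)

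\textbf{Part (\ref{rates-lim}).} The key input is \eqref{fxnlambdanp}, which with $d(x,p) \leq b$ yields $f(x_{n+1}) - \min(f) \leq \frac{b^2}{2\sum_{i=0}^{n}\gamma_i}$; moreover $f(x_n) \geq \min(f)$ always, so $|f(x_n) - \min(f)| = f(x_n) - \min(f)$. Passing from $\theta$ to $\theta^M$ preserves the rate-of-divergence property (adding more positive terms only increases the partial sum) and makes the modulus nondecreasing, so with $P := \lceil b^2(k+1)/2\rceil$ we get $\sum_{i=0}^{\theta^M(P)}\gamma_i \geq P \geq b^2(k+1)/2$, and since the partial sums increase in $n$ this persists for all $n \geq \theta^M(P)$. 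Plugging into the bound above, $f(x_{n+1}) - \min(f) \leq \frac{1}{k+1}$ for all $n \geq \theta^M(P)$, i.e. $f(x_m) - \min(f) \leq \frac{1}{k+1}$ for all $m \geq \theta^M(P) + 1 = \beta_{b,\theta}(k)$, which is the claimed rate of convergence; its nondecreasingness follows from that of $\theta^M$.

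\textbf{Main difficulty.} There is no real conceptual obstacle here — everything is quantitative bookkeeping on top of Lemma \ref{fundam}. The only places that demand care are getting the ceiling functions and the index shift $n \mapsto n+1$ exactly right, so that the rates emerge precisely as $\lceil b^2(k+1)^2\rceil + L - 1$ and $\theta^M(\lceil b^2(k+1)/2\rceil) + 1$ rather than off by one, and checking that the replacement of $\theta$ by $\theta^M$ is legitimate.
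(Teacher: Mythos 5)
Your proof is correct and follows essentially the same route as the paper: part (i) via telescoping \eqref{d2xnpp+1} and the same counting/contradiction argument, and part (ii) directly from \eqref{fxnlambdanp} together with the observation that replacing $\theta$ by $\theta^M$ preserves the rate of divergence. The only cosmetic difference is that in (ii) you obtain the bound for all $n \geq \theta^M(\lceil b^2(k+1)/2\rceil)$ from the monotonicity of the partial sums of $(\gamma_n)$, whereas the paper checks the single index $\beta_{b,\theta}(k)$ and invokes the nonincreasingness of $\sfxn$ from Lemma \ref{fundam}(i); both are equally valid bookkeeping.
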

\begin{proof}
\begin{enumerate}[(i)]
\item By \eqref{d2xnpp+1}, we get that for all $j\geq k$,
\bua
\sum_{n=j}^k d^2(x_n,x_{n+1}) &\leq&  \sum_{n=0}^k d^2(x_n,x_{n+1})\leq \sum_{n=0}^k (d^2(x_n,p) - d^2(x_{n+1},p))
 \\ &\leq& d^2(x,p) \leq   b^2.
\eua
Suppose that $d(x_n,x_{n+1}) > \frac1{k+1}$ for all $n \in [L,\Delta_b(k,L)]$. Then
$$(\Delta_b(k,L) - L + 1) \frac1{(k+1)^2} < \sum_{n=L}^{\Delta_b(k,L)} d^2(x_n,x_{n+1}) \leq b^2,$$
from which we get $\Delta_b(k,L) < b^2(k+1)^2 + L -1$, a contradiction. 
\item Since $f(x_n)\geq \min(f)$ 
for all $n\in\N$ and, by Lemma~\ref{fundam}.(i), $\sfxn$ is nonincreasing, all we have to show is that 
$$f(x_{\beta_{b, \theta}(k)}) - \min(f) \leq \frac1{k+1}.$$
Assume that this is is not true. Then, using \eqref{fxnlambdanp} and the fact that $\theta$ is a rate of divergence, 
we get that 
\bua
\frac1{k+1} &< & f(x_{\beta_{b, \theta}(k)})-\min(f)\leq 
\frac{b^2}{2\sum_{i=0}^{\theta^M(\lceil b^2(k+1)/2 \rceil)} \lambda_i} \\
&\leq & \frac{b^2}{2\sum_{i=0}^{\theta(\lceil b^2(k+1)/2 \rceil)} \lambda_i}  
\leq   \frac{b^2}{2\lceil b^2(k+1)/2\rceil}\leq
\frac1{k+1},
\eua
a contradiction.
\end{enumerate}
\end{proof}

\section{Quantitative results on the proximal point algorithm}

We will now proceed to derive the moduli that are needed in order to apply the results of \cite{KohLeuNicXX}.

As in the previous section, $f: X \to (-\infty, +\infty]$ is a convex, lsc proper function, and we set $F:=Argmin(f)\neq \emptyset$. 
For every $k\in \N$, let us define
\beq
AF_k := \left\{x \in X \mid \text{for all }i \leq k\text{, }d(x,J_{\gamma_i f}x) \leq \frac1{k+1} \right\}.
\eeq

\begin{proposition}
$(AF_k)$ is an approximation to $F$. 
\end{proposition}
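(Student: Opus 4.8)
The plan is to verify the two defining conditions of an approximation directly from the definition of $AF_k$: first that the sets are nested, $AF_{k+1} \subseteq AF_k$, and second that $F = \bigcap_{k \in \N} AF_k$. The nestedness is the easy half. Suppose $x \in AF_{k+1}$. Then for all $i \leq k+1$ we have $d(x, J_{\gamma_i f} x) \leq \frac1{k+2}$. In particular this holds for all $i \leq k$, and since $\frac1{k+2} \leq \frac1{k+1}$, we get $d(x, J_{\gamma_i f} x) \leq \frac1{k+1}$ for all $i \leq k$, i.e. $x \in AF_k$. No calculation beyond these two trivial monotonicities is needed.

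For the intersection, I would argue both inclusions. For $F \subseteq \bigcap_k AF_k$: if $x \in F = Argmin(f)$, then by Corollary~\ref{F-FixJgamma}, $x \in Fix(J_{\gamma_i f})$ for every $i$, so $d(x, J_{\gamma_i f} x) = 0 \leq \frac1{k+1}$ for all $i \leq k$ and all $k$; hence $x \in AF_k$ for every $k$. For the reverse inclusion $\bigcap_k AF_k \subseteq F$: suppose $x \in AF_k$ for all $k$. Fix any $i \in \N$. Then for every $k \geq i$ we have $d(x, J_{\gamma_i f} x) \leq \frac1{k+1}$; letting $k \to \infty$ gives $d(x, J_{\gamma_i f} x) = 0$, so $x = J_{\gamma_i f} x$. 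In particular $x \in Fix(J_f) = Argmin(f) = F$ by Corollary~\ref{F-FixJgamma} (taking $\gamma_0$, or just $i=0$ together with the fact that it suffices to be a fixed point of $J_f = J_{\gamma f}$ for one $\gamma$).

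The only mild subtlety — and the step I would flag as requiring the slightest care rather than being purely mechanical — is the reverse inclusion of the intersection: one must notice that membership of $x$ in $AF_k$ only controls $d(x, J_{\gamma_i f} x)$ for the indices $i \leq k$, so to conclude $x = J_{\gamma_i f} x$ for a fixed $i$ one must restrict attention to the cofinal set of $k \geq i$ before passing to the limit. Once that is observed, Corollary~\ref{F-FixJgamma} does the rest. I expect the whole proof to be three or four short lines.
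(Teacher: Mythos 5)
Your proof is correct and follows essentially the same route as the paper: nestedness from the definition, the inclusion $F\subseteq\bigcap_k AF_k$ via Corollary~\ref{F-FixJgamma}, and the reverse inclusion by letting $k\to\infty$ in $d(x,J_{\gamma_i f}x)\leq\frac{1}{k+1}$. The only cosmetic difference is that the paper takes $i=0$ from the start (so the ``cofinal $k\geq i$'' point you flag never arises), which you yourself note is all that is needed.
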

\begin{proof}
Since, obviously, $(AF_k)$ is a nonincreasing sequence, it remains to prove that $\displaystyle F =  \bigcap_{k \in \mathbb{N}} AF_k$.

``$\subseteq$'' Let $x\in F$ and  $k\in \N$ be arbitrary. Then, for all $i\leq k$,  by Corollary \ref{F-FixJgamma},  
we have that $F=Fix(J_{\gamma_i f})$, hence, in particular, $d(x,J_{\gamma_i f}x) \leq \frac1{k+1}$.  Thus, $x\in AF_k$.

``$\supseteq$'' Let $x \in \bigcap_{k \in \mathbb{N}} AF_k$. It follows, in particular, that  for any $k\in\N$,
$$d(x,J_{\gamma_0 f}x) \leq \frac1{k+1},$$
As a consequence, we get that  $x \in Fix(J_{\gamma_0 f})=F$, again by Corollary \ref{F-FixJgamma}.
\end{proof}

This approximation will turn out to be convenient for the results we are aiming for.

\begin{proposition}
With respect to the above approximation, $F$ is uniformly closed with moduli 
\beq 
\delta_F(k):=2k+1, \quad \omega_F(k):=4k+3. \label{def-mod-uclosed}
\eeq
\end{proposition}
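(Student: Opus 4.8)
The plan is to unwind the definition of uniform closedness directly. We must show that for any $k \in \N$ and any $p, q \in X$, if $q \in AF_{\delta_F(k)} = AF_{2k+1}$ and $d(p,q) \leq \frac1{\omega_F(k)+1} = \frac1{4k+4}$, then $p \in AF_k$; that is, for all $i \leq k$ we need $d(p, J_{\gamma_i f} p) \leq \frac1{k+1}$. So fix such $k$, $p$, $q$ and an index $i \leq k$, and estimate $d(p, J_{\gamma_i f} p)$ by inserting $q$ and $J_{\gamma_i f} q$ as intermediate points.

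The key step is the triangle inequality together with the nonexpansivity of $J_{\gamma_i f}$ from Proposition~\ref{ne}:
\[
d(p, J_{\gamma_i f} p) \leq d(p,q) + d(q, J_{\gamma_i f} q) + d(J_{\gamma_i f} q, J_{\gamma_i f} p) \leq 2\,d(p,q) + d(q, J_{\gamma_i f} q).
\]
Now I would bound the two terms. Since $i \leq k \leq 2k+1$, the assumption $q \in AF_{2k+1}$ gives $d(q, J_{\gamma_i f} q) \leq \frac1{2k+2}$. And $2\,d(p,q) \leq \frac{2}{4k+4} = \frac1{2k+2}$. Adding these yields $d(p, J_{\gamma_i f} p) \leq \frac1{2k+2} + \frac1{2k+2} = \frac1{k+1}$, which is exactly what is required. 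Since $i \leq k$ was arbitrary, $p \in AF_k$.

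There is essentially no obstacle here: the only genuine input is nonexpansivity of the resolvent (already available), and the numerical values $\delta_F(k) = 2k+1$ and $\omega_F(k) = 4k+3$ are precisely what makes the two error contributions each at most $\frac{1}{2k+2}$ so that they sum to $\frac1{k+1}$. The only point requiring a moment's care is checking the index bound $i \leq k \le \delta_F(k) = 2k+1$, which licenses applying the $AF_{2k+1}$-membership of $q$ to the specific index $i$; this is immediate. Thus the proof is a short triangle-inequality computation calibrated to the stated moduli.
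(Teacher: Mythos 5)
Your proof is correct and coincides with the paper's own argument: the same triangle-inequality decomposition through $q$ and $J_{\gamma_i f} q$, the same use of nonexpansivity of the resolvent, and the same numerical bookkeeping giving $\frac{2}{4k+4} + \frac{1}{2k+2} = \frac{1}{k+1}$. Nothing is missing.
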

\begin{proof}
Let $k\in\N$ and $p,q\in X$ be such that $q \in AF_{2k+1}$ and $d(p,q) \leq \frac1{4k+4}$. We need to show that $p\in AF_k$, i.e. that for all $i \leq k$, $d(p,J_{\gamma_i f}p) \leq \frac1{k+1}$.
Let $i \leq k$ be arbitrary. We get that 
\begin{eqnarray*}
d(p,J_{\gamma_i f}p) &\leq &  d(p,q) + d(q,J_{\gamma_i f}q) + d(J_{\gamma_i f}q,J_{\gamma_i f}p) \leq 2d(p,q) + d(q,J_{\gamma_i f}q)  \\
&\leq & \frac2{4k+4} + \frac1{2k+2} =  \frac1{k+1},
\end{eqnarray*}
where we have used at the second inequality the fact that $J_{\gamma_i f}$ is nonexpansive.
\end{proof}

\begin{lemma}
The sequence $\sxn$ is uniformly Fej\'er monotone w.r.t. $(AF_k)$ with modulus 
\begin{equation}
\chi(n,m,r):=\max\{n+m-1,m(r+1)\}. \label{mod-unif-fej}
\end{equation}
\end{lemma}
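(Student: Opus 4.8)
The plan is to unfold the definition of uniform Fej\'er monotonicity and reduce it, via the inequality \eqref{dxnm-p}, to a bound on the defect terms $d(p, J_{\gamma_i f} p)$ when $p \in AF_{\chi(n,m,r)}$. Fix $n, m, r \in \N$, let $p \in AF_{\chi(n,m,r)}$, and let $l \leq m$; we must show $d(x_{n+l}, p) < d(x_n, p) + \frac1{r+1}$. If $l = 0$ this is trivial, so assume $1 \leq l \leq m$. Applying \eqref{dxnm-p} with $m$ replaced by $l$ gives
\[
d(x_{n+l}, p) \leq d(x_n, p) + \sum_{i=n}^{n+l-1} d(p, J_{\gamma_i f} p).
\]

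Next I would estimate each summand. Since $p \in AF_{\chi(n,m,r)}$ and $\chi(n,m,r) \geq n+m-1 \geq n+l-1 \geq i$ for every index $i$ in the sum, the defining condition of $AF_{\chi(n,m,r)}$ applies: for all $i \leq \chi(n,m,r)$ we have $d(p, J_{\gamma_i f} p) \leq \frac1{\chi(n,m,r)+1}$. In particular, using also that $\chi(n,m,r) \geq m(r+1)$, each of the $l \leq m$ summands is at most $\frac1{m(r+1)+1}$. Hence
\[
\sum_{i=n}^{n+l-1} d(p, J_{\gamma_i f} p) \leq \frac{l}{m(r+1)+1} \leq \frac{m}{m(r+1)+1} < \frac{m}{m(r+1)} = \frac1{r+1},
\]
which yields the strict inequality $d(x_{n+l}, p) < d(x_n, p) + \frac1{r+1}$, as required.

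The only mild subtlety is the boundary case $m = 0$ (so that $m(r+1) = 0$ and the ``$<$'' in the chain above needs care) and the case $l = 0$; both are handled by noting that then the sum is empty, so $d(x_{n+l}, p) = d(x_n, p) < d(x_n, p) + \frac1{r+1}$ holds directly. I do not anticipate any real obstacle here: the choice of $\chi$ in \eqref{mod-unif-fej} was reverse-engineered precisely so that the two constraints ``$\chi(n,m,r) \geq$ every index appearing in the telescoped sum'' and ``$\chi(n,m,r)$ large enough that $m$ copies of $\frac1{\chi+1}$ sum to less than $\frac1{r+1}$'' are both met; the proof is a matter of recording that these two inequalities do the job.
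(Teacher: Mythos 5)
Your proof is correct and follows essentially the same route as the paper: unfold the definition, invoke inequality \eqref{dxnm-p}, bound the defect terms using $\chi(n,m,r)\geq n+m-1$ to see that each summand is small, and use $\chi(n,m,r)\geq m(r+1)$ to conclude. The only cosmetic difference is bookkeeping: the paper first enlarges the sum from $l$ to $m$ terms and bounds each by $\frac{1}{\chi(n,m,r)+1}$ (which makes the $m=0$ and $l=0$ cases go through without comment, since $\frac{m}{\chi+1}<\frac{1}{r+1}$ holds trivially when $m=0$), whereas you keep $l$ terms, bound each by $\frac{1}{m(r+1)+1}$, and then bound $l\leq m$, which forces you to single out $m=0$ — you correctly notice and handle this.
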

\begin{proof}
Let $n,m,r \in \mathbb{N}$, $p \in AF_{\chi(n,m,r)}$ and $l \leq m$. We get that 
\begin{eqnarray*}
d(x_{n+l},p) & \leq &  d(x_n, p) + \sum_{i=n}^{i=n+l-1} d(p, J_{\gamma_i f} p)\quad \text{by \eqref{dxnm-p}}\\
& \leq &  d(x_n, p) + \sum_{i=n}^{i=n+m-1} d(p, J_{\gamma_i f} p)\\
& \leq & d(x_n, p) + \frac{m}{\chi(n,m,r)+1} \\
& < &d(x_n, p) +\frac1{r+1},
\end{eqnarray*}
where at the second-to-last inequality we used that $\chi(n,m,r) \geq n+m-1$, so $d(p, J_{\gamma_i f} p)\leq \frac1{\chi(n,m,r)+1}$ for all $i=n,\ldots, n+m-1$, and at the last one, that $\chi(n,m,r) \geq m(r+1)$.
\end{proof}

\begin{proposition}\label{F-approximate-points}
Let $b\in \mathbb{R}$  be such that $d(x,p)\leq b$ for some $p\in F$ and assume that $\sum_{n=0}^\infty \gamma_n = \infty$ with 
rate of divergence $\theta$. Suppose, moreover, that $M:\N\to (0,\infty)$ is such that $M(k)\geq \max_{0 \leq i \leq k} \gamma _i$ for all 
$k\in\N$. 
Then  $\sxn$ has approximate $F$-points w.r.t. $(AF_k)$ with a (nondecreasing) modulus
\beq
\Phi_{b,\theta,M}(k):=\lceil b^2(k+1)^2\rceil + \beta_{b,\theta}(\lceil 2(k+1)^2M(k) \rceil-1) \label{def-Phi},
\eeq
where $\beta_{b,\theta}$ is defined by \eqref{rate-fxn}.
\end{proposition}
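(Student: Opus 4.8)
The plan is, for a fixed $k \in \N$, to pin down a concrete index $N \le \Phi_{b,\theta,M}(k)$ for which $x_N \in AF_k$, i.e.\ for which $d(x_N, J_{\gamma_i f} x_N) \le \frac1{k+1}$ holds for every $i \le k$ at once. The observation I would build on is that a sufficiently good approximate minimizer of $f$ is automatically close to \emph{all} of its resolvents of order $\gamma_i$ with $i \le k$, precisely because every such order is bounded by $M(k)$.

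First I would record an elementary estimate for a single resolvent: for any $y \in X$ and any $\gamma > 0$, feeding the competitor $y$ into the minimization defining $J_{\gamma f}(y)$ gives
$$\gamma f\!\left(J_{\gamma f}(y)\right) + \tfrac12 d^2\!\left(y, J_{\gamma f}(y)\right) \le \gamma f(y),$$
and since $f(J_{\gamma f}(y)) \ge \min(f)$ this rearranges to $d^2\!\left(y, J_{\gamma f}(y)\right) \le 2\gamma\,(f(y) - \min(f))$. Applying this with $y = x_N$ and $\gamma = \gamma_i$ for $i \le k$, and using $\gamma_i \le M(k)$, I obtain
$$d(x_N, J_{\gamma_i f} x_N) \le \sqrt{\,2M(k)\,(f(x_N) - \min(f))\,} \qquad \text{for all } i \le k .$$
Thus it suffices to exhibit an $N \le \Phi_{b,\theta,M}(k)$ with $f(x_N) - \min(f) \le \dfrac{1}{2(k+1)^2 M(k)}$.

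Here Lemma~\ref{rates}\,(ii) does the work: $\beta_{b,\theta}$ from \eqref{rate-fxn} is a nondecreasing rate of convergence for $f(x_n) \to \min(f)$, so the index $N := \beta_{b,\theta}\!\left(\lceil 2(k+1)^2 M(k)\rceil - 1\right)$ (the argument is a genuine natural number since $2(k+1)^2 M(k) > 0$) satisfies $f(x_N) - \min(f) \le \dfrac{1}{\lceil 2(k+1)^2 M(k)\rceil} \le \dfrac{1}{2(k+1)^2 M(k)}$, exactly as required, and $N \le \Phi_{b,\theta,M}(k)$ because of the nonnegative summand $\lceil b^2(k+1)^2\rceil$ in \eqref{def-Phi}. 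That summand is the width of the liminf window from \eqref{modliminf-xnxn+1}, so one could alternatively invoke Lemma~\ref{rates}\,(i) to locate $N$ inside such a window; this is not needed for membership in $AF_k$, but it explains the shape of the bound. Finally one should note that $\Phi_{b,\theta,M}$ is nondecreasing, which is routine once $M$ is taken nondecreasing (as one may always assume) or after applying the $f \mapsto f^M$ transformation from the Introduction.

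I do not expect a genuine obstacle. The entire proof rests on the single inequality $d^2(y, J_{\gamma f}(y)) \le 2\gamma\,(f(y) - \min(f))$; after that it is one substitution into the already-established rate $\beta_{b,\theta}$ plus elementary bookkeeping with ceilings. The only point demanding a little care is choosing the argument of $\beta_{b,\theta}$ so that the resulting denominator is precisely $2(k+1)^2 M(k)$ and so that the final $\Phi_{b,\theta,M}$ conforms to the nondecreasing-modulus convention.
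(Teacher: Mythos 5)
Your proof is correct, and its core is the same computation as the paper's: plug the competitor $y=x_N$ into the minimization defining $J_{\gamma_i f}$ to get $d^2(x_N,J_{\gamma_i f}x_N)\leq 2\gamma_i\,(f(x_N)-\min(f))$, bound $\gamma_i$ by $M(k)$, and feed the rate $\beta_{b,\theta}$ from Lemma~\ref{rates}\,(ii) the argument $\lceil 2(k+1)^2M(k)\rceil-1$. Where you genuinely diverge is in the choice of $N$: you take $N:=\beta_{b,\theta}(\lceil 2(k+1)^2M(k)\rceil-1)$ outright, whereas the paper invokes Lemma~\ref{rates}\,(i) to locate $N$ inside the liminf window $[c,\Delta_b(k,c)]$ with $c$ equal to your $N$, thereby also securing $d(x_N,x_{N+1})\leq\frac1{k+1}$ --- a property that, as you implicitly noticed, is never used afterwards, since $AF_k$ only involves the distances $d(\cdot,J_{\gamma_i f}\,\cdot)$. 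Your route is therefore a genuine simplification: it shows that the smaller function $k\mapsto\beta_{b,\theta}(\lceil 2(k+1)^2M(k)\rceil-1)$ is already an approximate $F$-point bound, and that the summand $\lceil b^2(k+1)^2\rceil$ in \eqref{def-Phi} is only needed to match the stated $\Phi_{b,\theta,M}$ (it is exactly the width of the paper's liminf window), membership $N\leq\Phi_{b,\theta,M}(k)$ being trivial since that summand is nonnegative. What the paper's version buys in exchange is the extra information that the chosen approximate $F$-point also has a small consecutive step; what yours buys is a tighter modulus and one fewer lemma. Your closing remark on monotonicity is also fair: as stated, $\Phi_{b,\theta,M}$ is nondecreasing only if $M$ is (which one may assume without loss of generality, since $\max_{0\leq i\leq k}\gamma_i$ is nondecreasing), or after passing to $\Phi^M$, and either fix preserves the modulus property.
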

\begin{proof}
Let $k \in \mathbb{N}$ be arbitrary. Denote, for simplicity, $$c:=\beta_{b,\theta}(\lceil 2(k+1)^2M(k) \rceil-1).$$
Applying Lemma~\ref{rates}.\eqref{rates-liminf}, we obtain that there is an $N \in [c, \Delta_b(k,c)]$ such that
$$d(x_{N}, x_{N +1}) \leq \frac1{k +1},$$
where $\Delta_b$ is given by \eqref{modliminf-xnxn+1}. We remark, first, that
$$N \leq \Delta_b(k,c) = \lceil b^2(k+1)^2\rceil + c - 1 < \lceil b^2(k+1)^2\rceil + c = \Phi_{b,\theta,M}(k).$$
Since $N\geq c$ and $\beta_{b,\theta}$ is a rate of convergence of $\sfxn$ towards $\min(f)$
(by Lemma~\ref{rates}.\eqref{rates-lim}),  we get that
$$ f(x_N) \leq \min(f)+\frac1{\lceil 2(k+1)^2M(k) \rceil}.$$
On the other hand, for all $i \leq k$, we have, by the definition of $J_{\gamma_i f}$, that
$$ \gamma_i f(J_{\gamma_i f} x_{N}) + \frac12 d^2(x_{N},J_{\gamma_i f} x_{N}) \leq \gamma_if(x_{N}) + \frac12 d^2(x_{N},x_{N}) = \gamma_if(x_{N}).$$
As $f(J_{\gamma_i f} x_{N}) \geq \min(f)$, it follows that, for all $i \leq k$,
\bua d^2(x_{N},J_{\gamma_i f} x_{N}) &\leq & 2\gamma_i(f(x_{N})-f(J_{\gamma_i f} x_{N}))\leq 
2\gamma_i(f(x_{N})-\min(f))\\
&\leq & 2M(k) \frac1{\lceil 2(k+1)^2M(k) \rceil} \leq \frac1{(k+1)^2}.
\eua
Thus, we have proved that for all $k \in \mathbb{N}$ there is an $N \leq \Phi_{b,\theta,M}(k)$ such that for all $i \leq k$,
$$d(x_N, J_{\gamma_i f} x_N) \leq \frac1{k+1}.$$
That's what was required.
\end{proof}

Now that all the necessary moduli have been computed, we may apply \cite[Theorems 5.1 and 5.3]{KohLeuNicXX} to get our main result, which finitarily expresses the strong convergence of the proximal point algorithm to a minimizer of $f$.

\begin{theorem}\label{main-quant-thm}
Let $b>0$, $\alpha,\theta:\N\to \N$ and $M:\N\to(0,\infty)$. Define  
$\Psi_{b,\theta,M,\alpha}, \, \Omega_{b,\theta,M,\alpha}:\N\times\N^\N\to\N$ as in Table \ref{tabel-1}.
Then for all
\be[(i)]
\item totally bounded CAT(0) spaces with modulus of total boundedness $\alpha$;
\item convex lsc proper mappings  $f : X \to (-\infty,\infty]$ with $Argmin(f)\ne\emptyset$;
\item  $x\in X$ such that $d(x,p)\leq b$ for some minimizer $p$ of $f$;
\item  sequences $(\gamma_n)$ in $(0,\infty)$ such that $\sum_{n=0}^\infty \gamma_n = \infty$ with 
rate of divergence $\theta$ and $M(k)\geq \max_{0 \leq i \leq k} \gamma _i$ for all $k\in\N$;
\ee
we have that
\be[(i)] 
\item $\Psi_{b,\theta,M,\alpha}$ is a rate of metastability for  the proximal point algorithm $\sxn$ starting with $x$, 
i.e. for all $k \in \mathbb{N}$ and  all $g : \mathbb{N} \to \mathbb{N}$ there is an $N \leq \Psi_{b,\theta,M,\alpha}(k,g)$ such that for all $i,j \in [N,N+g(N)]$,
$$d(x_i,x_j) \leq \frac1{k+1}.$$
\item For all $k \in \mathbb{N}$ and all $g : \mathbb{N} \to \mathbb{N}$ there is an $N \leq \Omega_{b,\theta,M,\alpha}(k,g)$ 
such that for all $i,j \in [N,N+g(N)]$,
$$d(x_i,x_j) \leq \frac1{k+1}$$ and for all $i\in[N,N+g(N)]$ and all $d \leq k$,
$$d(x_i, J_{\gamma_d f}x_i) \leq \frac1{k+1}.$$
\ee
\end{theorem}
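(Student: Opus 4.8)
The plan is to invoke the general metastability machinery of \cite{KohLeuNicXX} as a black box: Theorems 5.1 and 5.3 there assert that if a sequence $\sxn$ in a metric space with modulus of total boundedness $\alpha$ is uniformly Fej\'er monotone with respect to an approximation $(AF_k)$ of a set $F$ (with modulus $\chi$), if $F$ is uniformly closed with respect to $(AF_k)$ (with moduli $\delta_F,\omega_F$), and if $\sxn$ has approximate $F$-points (with modulus $\Phi$), then one obtains explicit rates of metastability $\Psi$ and $\Omega$ built only from $\alpha,\chi,\delta_F,\omega_F,\Phi$. So the work is entirely in checking that hypotheses (i)--(iv) of the theorem supply exactly these five data, and then reading off what the quoted theorems produce.

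First I would set $F:=Argmin(f)$ and take the approximation $(AF_k)$ defined in this section; hypothesis (ii) together with the Proposition above guarantees $(AF_k)$ is genuinely an approximation to $F$. The uniform-closedness moduli are $\delta_F(k)=2k+1$, $\omega_F(k)=4k+3$ from \eqref{def-mod-uclosed}; the uniform Fej\'er monotonicity modulus is $\chi(n,m,r)=\max\{n+m-1,m(r+1)\}$ from \eqref{mod-unif-fej}; and, crucially, hypotheses (iii) and (iv) are precisely the hypotheses of Proposition \ref{F-approximate-points}, so $\sxn$ has approximate $F$-points with the nondecreasing modulus $\Phi_{b,\theta,M}$ of \eqref{def-Phi}. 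The modulus of total boundedness $\alpha$ comes directly from hypothesis (i) (and a CAT(0) space is in particular a metric space, so the metatheorem applies). Thus all five inputs to \cite[Theorems 5.1 and 5.3]{KohLeuNicXX} are in hand, and $\Psi_{b,\theta,M,\alpha}$, $\Omega_{b,\theta,M,\alpha}$ are, by definition, the closed-form composites those theorems output when fed $\alpha,\chi,\delta_F,\omega_F,\Phi_{b,\theta,M}$ — this is exactly what Table \ref{tabel-1} records. Part (i) is then Theorem 5.1 of \cite{KohLeuNicXX} applied to this data, and part (ii), which additionally controls $d(x_i,J_{\gamma_d f}x_i)$ on the metastable block, is Theorem 5.3 of \cite{KohLeuNicXX}, using that membership of $x_i$ in $AF_k$ literally says $d(x_i,J_{\gamma_d f}x_i)\le \frac1{k+1}$ for all $d\le k$.

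The only genuinely non-routine point — and the one I would spell out with care — is the verification that the abstract hypotheses of the cited metatheorems are met in the present setting, in particular that Proposition \ref{F-approximate-points} delivers an approximate-$F$-point modulus in the precise quantified form ($\forall k\,\exists N\le\Phi(k)\,(x_N\in AF_k)$, with $\Phi$ nondecreasing) demanded there; this is already established, so the remaining content is bookkeeping: matching variable names, confirming monotonicity of $\Phi_{b,\theta,M}$ (inherited from $\theta^M$ and the ceiling operations), and checking that $b>0$ can serve as the bound $d(x,p)\le b$ required by both $\Delta_b$ and $\beta_{b,\theta}$. Once that matching is laid out, the two assertions follow by direct citation, with no further estimation needed.

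\begin{proof}
Fix $F:=Argmin(f)$ and the approximation $(AF_k)$ defined above; by the first Proposition of this section it is an approximation to $F$. By the second Proposition, $F$ is uniformly closed with respect to $(AF_k)$ with moduli $\delta_F,\omega_F$ as in \eqref{def-mod-uclosed}. By the Lemma above, $\sxn$ is uniformly Fej\'er monotone with respect to $(AF_k)$ with modulus $\chi$ as in \eqref{mod-unif-fej}. Hypotheses (iii) and (iv) are exactly the hypotheses of Proposition \ref{F-approximate-points}, so $\sxn$ has approximate $F$-points with respect to $(AF_k)$ with the nondecreasing modulus $\Phi_{b,\theta,M}$ of \eqref{def-Phi}. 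Finally, by hypothesis (i), $X$ is a metric space with modulus of total boundedness $\alpha$. Thus the data $\alpha$, $\chi$, $\delta_F$, $\omega_F$, $\Phi_{b,\theta,M}$ satisfy all the hypotheses of \cite[Theorems 5.1 and 5.3]{KohLeuNicXX}.

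Applying \cite[Theorem 5.1]{KohLeuNicXX} to this data yields a rate of metastability for $\sxn$ given by the composite of $\alpha,\chi,\delta_F,\omega_F,\Phi_{b,\theta,M}$ prescribed there, which is precisely $\Psi_{b,\theta,M,\alpha}$ of Table \ref{tabel-1}; this is assertion (i). Applying \cite[Theorem 5.3]{KohLeuNicXX} yields, for every $k$ and $g$, some $N\le \Omega_{b,\theta,M,\alpha}(k,g)$ with $d(x_i,x_j)\le\frac1{k+1}$ for all $i,j\in[N,N+g(N)]$ and with $x_i\in AF_k$ for all $i\in[N,N+g(N)]$. Unfolding the definition of $AF_k$, the latter says that $d(x_i,J_{\gamma_d f}x_i)\le\frac1{k+1}$ for all such $i$ and all $d\le k$. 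This is assertion (ii).
\end{proof}
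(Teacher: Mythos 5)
Your proof is correct and follows exactly the paper's argument: both cite \cite[Theorems 5.1 and 5.3]{KohLeuNicXX} as black boxes, feeding in the moduli $\alpha$, $\chi$, $\delta_F$, $\omega_F$, $\Phi_{b,\theta,M}$ established earlier in the section. The paper's only additional content is the explicit unfolding of the derived quantities $\chi_g^M$, $k_0$, and $(\chi_{k,\delta_F})^M_g$ in the notation of \cite{KohLeuNicXX} to justify the closed-form expressions in Table \ref{tabel-1}, which is precisely the bookkeeping you defer.
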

\begin{proof}
Apply \cite[Theorem 5.1]{KohLeuNicXX} to get (i) and  \cite[Theorem 5.3]{KohLeuNicXX} to obtain (ii).
Remark that in our case, using the notations 
from \cite{KohLeuNicXX}, $\alpha_G=\beta_H=id_{\R_+}$, hence $P=\alpha(4k+3)$ and,  furthermore,
\bua
\chi_g^M(n,r)&=&\max_{i \leq n} \chi_g(i,r)=\max_{i \leq n}\chi(i,g(i),r)\\
&=& \max_{i \leq n} \max\{i+g(i)-1,g(i)(r+1)\}, \text{~by \eqref{mod-unif-fej}}\\
k_0 &=&\max\left\{ k,\left\lceil\frac{\omega_F(k)-1}{2}\right\rceil \right\}=2k+1, \text{~by \eqref{def-mod-uclosed}}\\
(\chi_{k, \delta_F})^M_g(n,r) &=& \max\{2k+1,\max_{i \leq n}\max\{i+g(i)-1,g(i)(r+1)\}\}.
\eua
We denote, for simplicity, $\chi_{k, \delta_F}$ by $\tilchi$.
\end{proof}

\begin{table}[ht!]
\begin{center}
\scalebox{0.95}{\begin{tabular}{ | l |}
\hline\\[1mm]
$\Psi_{b,\theta,M,\alpha}(k,g):=(\Psi_0)_{b,\theta,M}(\alpha(4k+3),k,g)$\\[2mm]
$(\Psi_0)_{b,\theta,M}(0,k,g):=0$ \\[2mm]
$(\Psi_0)_{b,\theta,M}(n+1,k,g):=
\Phi_{b,\theta,M}\big(\chi_g^M((\Psi_0)_{b,\theta,M}(n,k,g),4k+3)\big)$\\[2mm]
$\chi_g^M(n,r)=\max_{i \leq n} \max\{i+g(i)-1,g(i)(r+1)\}$\\[4mm]
$\Omega_{b,\theta,M,\alpha}(k,g):= (\Omega_0)_{b,\theta,M}(\alpha(8k+7),k,g)$\\[2mm]
$(\Omega_0)_{b,\theta,M}(0,k,g):=0$\\[2mm]
$(\Omega_0)_{b,\theta,M}(n+1,k,g):=
\Phi_{b,\theta,M}\big(\tilchi^M_g((\Omega_0)_{b,\theta,M}(n,k,g),8k+8\big)$\\[2mm]
$\tilchi^M_g(n,r)=\max\{2k+1,\max_{i \leq n}\max\{i+g(i)-1,g(i)(r+1)\}\}$\\[2mm]
with $\Phi_{b,\theta,M}$ given by \eqref{def-Phi}\\[1mm]
\hline 
\end{tabular}}\\[1mm]
\caption{Definitions of $\Psi_{b,\theta,M,\alpha}$ and $\Omega_{b,\theta,M,\alpha}$} \label{tabel-1}
\end{center}
\end{table}

The above theorem can be considered a ``true" finitization (in the sense of Tao) of  Theorem \ref{ppa-cat0-strong}, since 
\begin{enumerate}[(i)]
\item it involves only a finite segment of the proximal point algorithm $(x_n)$; 
\item the existence of a rate of metastability  $\Psi_{b,\theta,M,\alpha}$ is, as previously stated, classically equivalent 
to Cauchyness; 
\item  the existence of the second rate $\Omega_{b,\theta,M,\alpha}$ guarantees, for complete CAT(0) spaces,  
that the limit of the sequence is an element of $F=Argmin(f)$ (see \cite[Remark 5.5]{KohLeuNicXX}); 
\item the modulus of total boundedness only needs to apply to the ball of radius $b$ considered in the proof, 
therefore we have derived strong convergence for locally compact CAT(0) spaces, as pointed out in \cite[Remark 5.4]{KohLeuNicXX}.
\end{enumerate} 
Furthermore, both rates $\Psi_{b,\theta,M,\alpha}$ and $\Omega_{b,\theta,M,\alpha}$ are computable and are, moreover, expressed using primitive 
recursive functionals.

\end{document}